%
\documentclass[runningheads]{llncs}
\usepackage[T1]{fontenc}
\usepackage{amsfonts}
\usepackage[dvipsnames]{xcolor}
\usepackage{bm}
\usepackage{amsmath}
\usepackage{amssymb}
\usepackage{url}
\usepackage{comment}
\usepackage[hidelinks]{hyperref}
\usepackage{wrapfig,lipsum,booktabs}
\usepackage{etoolbox}
\usepackage{graphicx}
\usepackage{bbold}
\usepackage{cleveref}
\usepackage{xfrac}

\usepackage{enumerate}
\usepackage{verbatim}
\usepackage{bbm}
\usepackage[T1]{fontenc}
%
\usepackage{graphicx}
%
%




\newcommand{\Tr}{\operatorname{Tr}}
\newcommand{\todo}[1]
  {\vspace{5 mm}\par \noindent 
  \framebox{\begin
  {minipage}[c]{0.95 \textwidth} \tt #1
\end{minipage}}\vspace{5 mm}\par}
\addtolength{\hoffset}{-0,5 cm} \addtolength{\textwidth}{1 cm}
\addtolength{\voffset}{-1 cm} \addtolength{\textheight}{2 cm}

\begin{document}
\title{Infinite-dimensional Siegel disc as symplectic and K\"ahler quotient}
%
%
\author{Alice Barbara Tumpach\inst{1, 2}\orcidID{0000-0002-7771-6758} }
\authorrunning{A. B. Tumpach}

\institute{Wolfgang Pauli Institut, Oskar-Morgensternplatz 1, 1090 Vienna, Austria\\
 \and
Laboratoire Painlev\'e, Lille University, 59650 Villeneuve d'Ascq, France
\email{alice-barbora.tumpach@univ-lille.fr}\\
\url{http://www.geometricgreenlearning.com}
 }
\maketitle              
\begin{abstract}
In this paper, we construct the restricted infinite-dimensional Siegel disc as a Marsden-Weinstein symplectic reduced space and as K\"ahler quotient of a weak K\"ahler manifold. The obtained symplectic form is invariant with respect to the left action of the infinite-dimensional restricted symplectic group and coincides with the Kirillov-Kostant-Souriau symplectic form of the restricted Siegel disc obtained via the identification with an affine coadjoint orbit of the restricted symplectic group, or equivalently with a coadjoint orbit of the universal central extension of the restricted symplectic group.

\keywords{Symplectic reduction  \and K\"ahler quotient \and Siegel domain.}
\end{abstract}
%
%
%
\section{Introduction}

\subsection{Finite-dimensional picture.}
Let us recall the theory of finite-dimensional dual pairs \cite{Skerritt_Vizman}.
Consider the set $\operatorname{Mat}_{2n\times 2n}(\mathbb{R})$ of real $2n\times2n$ matrices  endowed with the complex structure
$
J_n = \begin{pmatrix}
0 & I_n\\
-I_n &0\end{pmatrix},
$
where $I_n$ denotes the identity matrix of size $n\times n$. Together with the scalar product
$
\langle u, v \rangle = \Tr u^T v, 
$
where $u, v \in \operatorname{Mat}_{2n\times 2n}(\mathbb{R}),$ the vector space 
$\operatorname{Mat}_{2n\times 2n}(\mathbb{R})$ is a K\"ahler flat space with symplectic form defined for $u, v \in \operatorname{Mat}_{2n\times 2n}(\mathbb{R})$ by
$
\Omega(u, v) = \langle u, J v \rangle = \Tr u^T J v.
$
Since the group $\operatorname{GL}(2n, \mathbb{R})$ of invertible $2n\times 2n$ matrices is an open set in $\operatorname{Mat}_{2n\times 2n}(\mathbb{R}),$ the K\"ahler structure of $\operatorname{Mat}_{2n\times 2n}(\mathbb{R})$ restricts to a K\"ahler structure on $\operatorname{GL}(2n, \mathbb{R})$. Note that this K\"ahler structure is not invariant by left or right translations by elements of the group. Consider the following subgroups of $\operatorname{GL}(2n, \mathbb{R})$:
\[
\operatorname{Sp}(2n, \mathbb{R}) = \{ a \in \operatorname{GL}(2n, \mathbb{R}), a^T J a = J\}
\textrm{ and }
\operatorname{O}(2n, \mathbb{R}) = \{ a \in \operatorname{GL}(2n, \mathbb{R}), a^T a = I_{2n}\}.
\]
Let $\operatorname{Sp}(2n, \mathbb{R})$ (resp. $\operatorname{O}(2n, \mathbb{R})$) act on the left (resp. right) on $\operatorname{GL}(2n, \mathbb{R})$ (more generally on $\operatorname{Mat}_{2n\times 2n}(\mathbb{R})$) by matrix multiplication.  Note that both actions commute and admit equivariant moment maps:

\setlength{\unitlength}{0.4 mm}
\begin{picture}(100, 45)
\put(130,40){$\operatorname{GL}(2n, \mathbb{R})$}
\put(130,25){$(\langle\cdot, \cdot \rangle, J, \Omega)$}
\put(125,30){\vector(-3, -2){20}}
\put(175,30){\vector(3, -2){20}}
\put(100, 30){$\mu_{\operatorname{Sp}}$}
\put(185, 30){$\mu_{\operatorname{O}}$}
\put(60, 10){$\mathfrak{sp}(2n, \mathbb{R})^*\simeq \mathfrak{sp}(2n, \mathbb{R})$}
\put(200, 10){$\mathfrak{o}(2n, \mathbb{R})^*\simeq \mathfrak{o}(2n, \mathbb{R})$}
\end{picture}
\\
where, for $M \in \operatorname{GL}(2n, \mathbb{R})$,
$
\mu_{\operatorname{Sp}}(M) = -\frac{1}{2}M M^TJ \textrm{  and  } \mu_{\operatorname{O}}(M) = -\frac{1}{2}M^TJ M,
$
with the identifications $\mathfrak{sp}(2n, \mathbb{R})^*\simeq \mathfrak{sp}(2n, \mathbb{R})$ and $\mathfrak{o}(2n, \mathbb{R})^*\simeq \mathfrak{o}(2n, \mathbb{R})$ given by (the real part of)  the trace.
It was show in \cite[Proposition 4.2]{Skerritt_Vizman} (see also \cite{Libermann_Marle,Balleier_Wurzbacher}) that each level set of the momentum map for the $\operatorname{Sp}(2n, \mathbb{R})$-action is a homogeneous space under $\operatorname{O}(2n, \mathbb{R})$, and vice-versa, each level set of the momentum map for the $\operatorname{O}(2n, \mathbb{R})$-action is a homogeneous space under $\operatorname{Sp}(2n, \mathbb{R})$. This property is refered to as \textit{mutually transitive actions}, and the groups $\operatorname{Sp}(2n, \mathbb{R})$ and $\operatorname{O}(2n, \mathbb{R})$ form a so-called \textit{dual pair}. It was proved in \cite[Proposition 2.8]{Skerritt_Vizman} that, given a pair of mutually transitive actions on a symplectic manifold $M$, the symplectic reduction with respect to one group is symplectomorphic to a coadjoint orbit of the dual group. In particular, the symplectic quotient at $J\in \mathfrak{o}(2n, \mathbb{R})$
\[
\mu^{-1}_{\operatorname{O}}(J)/\operatorname{O}(2n, \mathbb{R})_J 
\]
is symplectomorphic to a coadjoint orbit of $\operatorname{Sp}(2n, \mathbb{R})$. In fact, the level set $\mu^{-1}_{\operatorname{O}}(J)$ is simply $\operatorname{Sp}(2n, \mathbb{R})$
and the stabilizer $\operatorname{O}(2n, \mathbb{R})_J$ of $J$ under the $\operatorname{O}(2n, \mathbb{R})$-action  is easily seen to be isomorphic to the unitary group $\operatorname{U}(n)$. The quotient space is therefore $\operatorname{Sp}(2n, \mathbb{R})/\operatorname{U}(n)$ and is isomorphic to the Siegel disc \cite[Section~4]{Nielsen}, \cite{Siegel,Barbaresco}:
\[
\mathfrak{D}(n) = \{Z \in \operatorname{Mat}_{n\times n}(\mathbb{C})\mid  Z^T = Z\; \text{and}\;
\operatorname{id}_{\mathcal{H}_-}-Z^* Z\succ 0\}.
\]
This symplectic reduction was used in \cite{Ohwasa} in order to relate  two different formulations of the Gaussian wave packet dynamics commonly used in semiclassical
mechanics. 
In line with the finite-dimensional theory,  $\mathfrak{D}(n)$ can be realized as a $\operatorname{Sp}(2n, \mathbb{R})$ (co-)adjoint orbit, namely as the (co-)adjoint orbit of $J\in \mathfrak{sp}(2n, \mathbb{R})$.

Another example of the theory of dual pairs is the pair  $\left(\operatorname{U}(n), \operatorname{U}(m)\right)$ acting by left and right multiplications on $\operatorname{Mat}_{n\times m}(\mathbb{C})$. The realization of the Grassmannian $\operatorname{Gr}(m, n)$ as a symplectic quotient of $\operatorname{U}(m)$ is a crucial step in the geometric proof of the connectedness of the space of constant norm Parseval frames of $n$ vectors in $\mathbb{C}^m$ know in Signal Processing and Frame Theory as the Frame Homotopy Conjecture \cite{DykemaStrawn,Needham_Shonkwiler}.

\subsection{Infinite-dimensional picture}

In the infinite dimensional setting, examples of dual pairs are know notably in fluid dynamics 
\cite{Gay-Balmaz_Vizman1,Gay-Balmaz_Vizman2}
and in the theory of diffeomorphisms groups and their co-adjoint orbits \cite{Haller_Vizman1,Haller_Vizman2,Haller_Vizman3,Blaga_Salazar_Tortorella_Vizman}. 
In the context of operator theory, we have shown in \cite{Tum_Gr,Tum_phd} that (each connected component of the) restricted Grassmannian is a symplectic and K\"ahler quotient of a flat space of operators by the right action of a unitary group modeled on  trace class operators. A. Weinstein  then raised the question of whether the restricted Grassmannain could be realized as a coadjoint orbit of a unitary group. The answer has been given in \cite{BRT} and is twofold: while the restricted Grassmannian is not a coadjoint orbit of the restricted unitary group, it is an affine coadjoint orbit or, equivalently, a coadjoint orbit of the universal central extension of the restricted unitary group. Consequently,  the restricted Grassmannian is an infinite-dimensional example that can be realized as symplectic quotient of one group and at the same time as the coadjoint orbit of a ``dual'' group. However, as far as we know, the general theory of dual pairs in this context has not been fully investigated.

The study of the infinite-dimensional counterpart of the Siegel disc is motivated on one hand by Teichm\"uller theory \cite{Tum_Teichmuller} and on the other hand by recent developments of the theory of Gaussian processes in infinite dimensions \cite{Minh1,Minh2,Minh3}. In particular, the (real) restricted Siegel disc appears in \cite{Minh1} as the parameter space of  zero-mean Gaussian measures on an infinite-dimensional Hilbert space which are equivalent to a fixed non-degenerate Gaussian measure \cite[Equ.~17]{Minh1}.
In \cite{GRT_proceedings} it was shown that the restricted Siegel disc is a coadjoint orbit of the universal central extension of the restricted symplectic group (see Section~\ref{section_Spres} for the definition of this group). In the present paper, we give the mirror presentation of the restricted Siegel disc as symplectic and K\"ahler quotient. 

\subsection{Contributions}

The present paper contains the following contributions:
\begin{itemize}
\item[$\bullet$] In Theorem~\ref{disc restreint de Siegel = espace homogene}, we prove that the proper subgroup $\operatorname{Sp}_{1,2}(\mathcal{V}, \Omega)\subset \operatorname{Sp}_{\textrm{res}}(\mathcal{V}, \Omega)$ acts transitively on the restricted Siegel disc. 
Note that the restricted Siegel disc is a subset of the set of Hilbert-Schmidt operators, whereas $\operatorname{Sp}_{1,2}(\mathcal{V}, \Omega)$ is build using both  both trace class and Hilbert-Schmidt operators.
\item[$\bullet$] In Theorem~\ref{momentum map}, we compute the momentum map of the right action of the orthogonal group $\operatorname{O}_{1,2}(\mathcal{V})$ on a flat K\"ahler space $\mathcal{M}_{1,2}(\mathcal{V})$, and we prove 
in Theorem~\ref{non-equi} that this momentum map is not equivariant.
\item[$\bullet$] We show in Propositions~\ref{prop1} and \ref{prop2} that the right action of the unitary group $\operatorname{U}_1(\mathcal{H}_+)$ preserves the K\"ahler structure of  $\mathcal{M}_{1,2}(\mathcal{V})$, as well as the zero level set of the momentum map.
\item[$\bullet$] In Theorem~\ref{quotient}, we show that the K\"ahler reduced space at $0$ is isomorphic to the restricted Siegel disc, and  we compute explicitly in Theorem~\ref{symplectic_form} the resulting symplectic form.
\end{itemize}
\section{Mathematical background}

\subsubsection{The general 
linear group $\operatorname{GL}(\mathcal{V})\subset \operatorname{GL}(\mathcal{H})$.}
Consider a real infinite-dimensional separable Hilbert space 
$\mathcal{V}$ with inner product 
$\langle\cdot, \cdot\rangle_{\mathcal{V}}$.
Endow $\mathcal{V}$ with a complex structure $J$ compatible with 
$\langle\cdot, \cdot\rangle_{\mathcal{V}}$ (this is always possible in the infinite-dimensional case):
$J^2=-\operatorname{id}_\mathcal{V}\;\;\text{and}\;\;
\langle Ju,Jv\rangle_{\mathcal{V}}=\langle u,v\rangle_{\mathcal{V}},
$
and denote by $\Omega$ the strong symplectic bilinear form  
$
\Omega(u,v)=\langle u,Jv\rangle_{\mathcal{V}}.$
The real infinite-dimensional Hilbert space $\mathcal{V}$ endowed with $\big(\langle\cdot, \cdot\rangle_{\mathcal{V}}, J, \Omega\big)$ is a (flat) K\"ahler space.

Let $\operatorname{GL}(\mathcal{V})$ be the general 
linear group of real linear bounded invertible operators
on $\mathcal{V}$. Recall that $\operatorname{GL}(\mathcal{V})$ 
is a Banach Lie group whose Banach Lie algebra 
$\mathfrak{gl}(\mathcal{V}):= L(\mathcal{V})$ is the Banach space  of 
all real linear bounded  operators on $\mathcal{V}$.
Consider the complexification $\mathcal{H} := 
\mathcal{V}\otimes \mathbb{C}$ of $\mathcal{V}$ with hermitian 
scalar product
$
\langle u, v\rangle_\mathcal{H} := 
\langle \Re u, \Re v\rangle_{\mathcal{V}} + 
\langle \Im u, \Im v\rangle_{\mathcal{V}} + 
i \langle \Im u, \Re v\rangle_{\mathcal{V}} - 
i \langle \Re u, \Im v\rangle_{\mathcal{V}},
$
and extend $J$ and $\Omega$ by $\mathbb{C}$-linearity to $\mathcal{H}$ 
and decompose $\mathcal{H}$ as
$
\mathcal{H}=\mathcal{H}_+\oplus\mathcal{H}_-,$ 
where $\mathcal{H}_\pm:=\operatorname{Eig}_{\pm i}(J)$ denotes the eigenspace of 
$J$ associated to $\pm i$. With respect to this decomposition,
the operators 
$a\in \mathfrak{gl}(\mathcal{V})$ that preserve the real subspace $\mathcal{V}\subset \mathcal{H}$, extended complex linearly 
to $\mathcal{H}$,  have the block form
$ a=\begin{pmatrix} g&h\\ \bar h&\bar g
\end{pmatrix}. $

\subsubsection{The general 
linear group $\operatorname{GL}_{1,2}(\mathcal{V})\subset \operatorname{GL}(\mathcal{V})$.}

The group $\operatorname{GL}_{1,2}(\mathcal{V})$ is defined as the following group of  invertible operators
\[
\operatorname{GL}_{1,2}(\mathcal{V}) = \left\{ \begin{pmatrix} g&h\\ \bar h&\bar g
\end{pmatrix}\in \operatorname{GL}(\mathcal{H}), h\in L^2(\mathcal{H}_-,\mathcal{H}_+),
g - \operatorname{id}_{\mathcal{H}_+} \in L^1(\mathcal{H}_+)
\right\},
\]
where $L^1(\mathcal{H}_+)$ denotes the  Banach space of the trace class operators on $\mathcal{H}_+$, and $L^2(\mathcal{H}_-, \mathcal{H}_+)$  the Hilbert space of Hilbert-Schmidt operators from $\mathcal{H}_-$ to  $\mathcal{H}_+$.
It is a real Banach Lie group modeled on the real Banach Lie algebra 
\[
\mathfrak{gl}_{1,2}(\mathcal{V}) = \left\{ \begin{pmatrix}
A_1            &A_2\\
\bar{A}_2 &\bar{A}_1\end{pmatrix}\in \mathfrak{gl}(\mathcal{H}), A_1\in L^1(\mathcal{H}_+), A_2\in L^2(\mathcal{H}_-,\mathcal{H}_+)\right\}
\]
endowed with the norm 
$
\left\|\begin{pmatrix}
A_1            &A_2\\
\bar{A}_2 &\bar{A}_1\end{pmatrix}\right\|_{1,2} = \|A_1\|_1+ \|A_2\|_2,
$
where $\|\cdot\|_1$ denotes the Trace class norm and $\|\cdot\|_2$ denotes the Hilbert-Schmidt norm.

\subsubsection{The symplectic linear group $\operatorname{Sp}(\mathcal{V},\Omega)\subset \operatorname{GL}(\mathcal{V})$.}

The expression of the complex bilinear form $\Omega$ with respect 
to the  hermitian scalar product 
$\langle\cdot, \cdot\rangle_\mathcal{H}$ reads
$
\label{complex_omega_j_inner_product_relation}
\Omega(u, v) = \langle u, \overline{J v}\rangle_\mathcal{H} 
= \langle J v, \overline{u}\rangle_\mathcal{H}.
$
The \textbf{symplectic linear group} $\operatorname{Sp}(\mathcal{V},\Omega) $ is the subgroup of $\operatorname{GL}(\mathcal{V})$ which preserves the symplectic bilinear form.
Note that
$a\in\operatorname{Sp}(\mathcal{V},\Omega)$ if and only if 
$a\in\operatorname{GL}(\mathcal{V})$ and $a^* J a = J$.
Using the block decomposition with respect to the direct sum $\mathcal{H} = \mathcal{H}_+\oplus 
\mathcal{H}_-$, we conclude that $a = \begin{pmatrix} g&h\\ \bar h&\bar g
\end{pmatrix}$ belongs to $\operatorname{Sp}(\mathcal{V},\Omega)$ iff
$g^*g-h^T\bar{h}=\operatorname{id}_{\mathcal{H}_+}$ and 
$g^*h=h^T\bar{g}\in L(\mathcal{H}_-, \mathcal{H}_+),$
where the transpose is defined by
$
h^T:=(\bar{h})^*  \in L(\mathcal{H}_-, \mathcal{H}_+).
$

\subsubsection{The  restricted symplectic linear group 
${\rm Sp}_{\rm res}(\mathcal{V},\Omega)$.}\label{section_Spres}
The restricted
symplectic group is
\begin{align}
\operatorname{Sp}_{\rm{res}}(\mathcal{V},\Omega)
&=\left\{\left.
\begin{pmatrix}
g            &h\\
\bar{h} &\bar{g}\end{pmatrix}
\in\operatorname{Sp}(\mathcal{V},\Omega) \,\right|\, 
h\in L^2(\mathcal{H}_-,\mathcal{H}_+)\right\}.
\end{align}

\subsubsection{The symplectic linear group $\operatorname{Sp}_{1,2}(\mathcal{V},\Omega)$.}

The Banach Lie group $\operatorname{Sp}_{1,2}(\mathcal{V},\Omega)$ is defined as  \begin{align}
\operatorname{Sp}_{1,2}(\mathcal{V},\Omega)
&=\left\{\left.
\begin{pmatrix}
g            &h\\
\bar{h} &\bar{g}\end{pmatrix}
\in\operatorname{Sp}(\mathcal{V},\Omega) \,\right|\, 
h\in L^2(\mathcal{H}_-,\mathcal{H}_+), g - \operatorname{id}_{\mathcal{H}_+} \in L^1(\mathcal{H}_+)\right\}.
\end{align}


\subsubsection{The orthogonal group  $\operatorname{O}(\mathcal{V})$.}
On the real Hilbert space $\mathcal{V}$ with scalar product $\langle\cdot, \cdot\rangle_{\mathcal{V}}$ acts the orthogonal group $\operatorname{O}(\mathcal{V})$ consisting of operators $a\in \operatorname{GL}(\mathcal{V})$ such that 
\[
\langle a u, a v\rangle_{\mathcal{V}} = \langle  u,  v\rangle_{\mathcal{V}}.
\]
Note  that  $a \in \operatorname{O}(\mathcal{V})$ iff  $a^T = a$ where the transpose $T$ denotes the adjoint with respect to the scalar product $\langle\cdot, \cdot\rangle_{\mathcal{V}}$. The $\mathbb{C}$-linear extension of $\operatorname{O}(\mathcal{V})$ to $\mathcal{H} = \mathcal{V}\oplus \mathbb{C}$ (also denoted by  $\operatorname{O}(\mathcal{V})$ ) preserves $\mathcal{V}$ hence consists of operators $a$ such that 
\[
a (\Re u + i \Im u) = a \Re u + i a\Im u = \Re (a u) + i \Im (a u).
\]
Consequently $a \Re u = \Re a u$ and $a \Im u = \Im a u$ and 
\begin{align*}
\langle a u, a v\rangle_\mathcal{H} &:= 
\langle \Re a u, \Re a v\rangle_{\mathcal{V}} + 
\langle \Im a u, \Im a v\rangle_{\mathcal{V}} + 
i \langle \Im a u, \Re a v\rangle_{\mathcal{V}} - 
i \langle \Re a u, \Im a v\rangle_{\mathcal{V}},\\
&= 
\langle a \Re  u, a\Re  v\rangle_{\mathcal{V}} + 
\langle a\Im  u, a\Im  v\rangle_{\mathcal{V}} + 
i \langle a\Im  u, a\Re  v\rangle_{\mathcal{V}} - 
i \langle a\Re  u, a\Im  v\rangle_{\mathcal{V}},\\
& = \langle  u,  v\rangle_\mathcal{H} 
\end{align*}
Therefore, 
\begin{equation}\label{O_condition}
a \in \operatorname{O}(\mathcal{V}) \Longleftrightarrow 
\begin{cases} 
a \in \operatorname{GL}(\mathcal{V})\subset 
\operatorname{GL}(\mathcal{H})\\ 
a^* a = \textrm{id}_{\mathcal{H}}.
\end{cases}
\end{equation}
\subsubsection{The orthogonal group  $\operatorname{O}_{1,2}(\mathcal{V})$.}
The orthogonal group  $\operatorname{O}_{1,2}(\mathcal{V})$ is defined as
\begin{equation}\label{O12}
\operatorname{O}_{1,2}(\mathcal{V}) = \left\{
a = \begin{pmatrix}
g       &h\\
\bar{h} &\bar{g}\end{pmatrix} \in \operatorname{GL}_{1,2}(\mathcal{H}), a^* a = \textrm{id}_{\mathcal{H}}
\right\}.
\end{equation}
It is a Banach Lie group with Lie algebra
\begin{equation}\label{o12}
\mathfrak{o}_{1,2}(\mathcal{V}) = \left\{
 A = \begin{pmatrix}
A_1       &A_2\\
\bar{A_2} &\bar{A_1}\end{pmatrix} \in \mathfrak{gl}_{1,2}(\mathcal{H}), A^*+  A = 0
\right\}.
\end{equation}

\subsubsection{The infinite-dimensional restricted Siegel disc $\mathfrak{D}_{\rm{res}}(\mathcal{H})$.}
The infinite-dimensional  restricted Siegel disc is, by definition,
\[
\mathfrak{D}_{\rm{res}}(\mathcal{H}):=
\{Z\in L^2(\mathcal{H}_-,\mathcal{H}_+)
\mid  Z^T = Z\; \text{and}\;
\operatorname{id}_{\mathcal{H}_-}-Z^* Z\succ 0\},
\]
where, for $A\in L(\mathcal{H}_+,\mathcal{H}_+)$ the notation
$A\succ 0$ means $\langle Au,u\rangle_\mathcal{H}>0$, for all
$u\in\mathcal{H}_+, u\neq 0$, and where
$Z^T=(\bar{Z})^* \in L(\mathcal{H}_-,\mathcal{H}_+)$ with $\bar{Z}u := \overline{Z\bar{u}}$, $u \in \mathcal{H}_+$.
The infinite-dimensional  restricted Siegel disc is an open set in the complex Hilbert space
\begin{equation}\label{def_E_star}
\mathcal{E}_{\rm{res}}(\mathcal{H}):=
\{V\in L^2(\mathcal{H}_-,\mathcal{H}_+)\mid V^T = V\}.
\end{equation}

\section{$\mathfrak{D}_{\rm res}(\mathcal{H})$ as Homogeneous space under $\operatorname{Sp}_{1,2}(\mathcal{V},\Omega)$.}
It was shown in  \cite[Theorem~1]{GRT_proceedings} that $\operatorname{Sp}_{\rm res}(\mathcal{V},\Omega)$ acts transitivelly on $\mathfrak{D}_{\rm res}(\mathcal{H})$.
In fact, the restriction of the $\operatorname{Sp}_{\rm res}(\mathcal{V},\Omega)$-action to the proper subgroup  $\operatorname{Sp}_{1,2}(\mathcal{V},\Omega) \subsetneq \operatorname{Sp}_{\rm res}(\mathcal{V},\Omega)$ is still  transitive on $\mathfrak{D}_{\rm res}(\mathcal{H})$. 
\begin{remark}
In the present paper we use Bourbaki convention for quotient spaces: $G/H$ is the set of equivalence classes of the form $gH$, $g\in G$, where $H$ acts on the right on $G$. This differs from \cite{GRT_proceedings} but is coherent with the notation used for symplectic and K\"ahler quotients \cite{Tum_Gr}.
\end{remark}
\begin{theorem}
\label{disc restreint de Siegel = espace homogene} 
The map $\rho: \operatorname{Sp}_{1,2}(\mathcal{V},\Omega)
\times\mathfrak{D}_{\rm res}(\mathcal{H})\longrightarrow
\mathfrak{D}_{\rm res}(\mathcal{H}),$
\begin{equation}\label{Sp_action}
\begin{aligned}
 &\left(
\begin{pmatrix}
g            &h\\
\bar{h} &\bar{g}\end{pmatrix}
,Z\right)\longmapsto (gZ+h)(\bar{h}Z+\bar{g})^{-1}
\end{aligned}
\end{equation}
is a well-defined transitive action of the symplectic 
group $\operatorname{Sp}_{1,2}(\mathcal{V},\Omega)$ onto the restricted Siegel disc. The isotropy group of $0$ 
is isomorphic to the unitary group
\[
\operatorname{Sp}_{1,2}(\mathcal{V},\Omega)_0\simeq\operatorname{U}_1(\mathcal{H}_+) := \operatorname{U}(\mathcal{H}_+)\cap \{\operatorname{id}_{\mathcal{H}_+} + L^1(\mathcal{H}_+)\}.
\]
Therefore, the orbit map induces a diffeomorphism of real Banach manifolds
\begin{equation}\label{diffeo1}
\operatorname{Sp}_{1, 2}(\mathcal{V},\Omega)/\operatorname{U}_1(\mathcal{H}_+)\longrightarrow \mathfrak{D}_{\rm res}(\mathcal{H}),\quad 
\left[
\begin{pmatrix}
g       &h\\
\bar{h} &\bar{g}\end{pmatrix}
\right]_{\operatorname{U}(\mathcal{H}_+)}\longmapsto h\bar{g}^{-1}.
\end{equation}
\end{theorem}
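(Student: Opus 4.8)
The plan is to leverage the inclusion $\operatorname{Sp}_{1,2}(\mathcal{V},\Omega)\subset\operatorname{Sp}_{\rm res}(\mathcal{V},\Omega)$, for which the very same formula \eqref{Sp_action} is already known to define a transitive action by \cite[Theorem~1]{GRT_proceedings}. Since $\operatorname{Sp}_{1,2}(\mathcal{V},\Omega)$ is a subgroup, the action axioms and the well-definedness of $\rho$ — that $\bar hZ+\bar g$ is invertible and that $(gZ+h)(\bar hZ+\bar g)^{-1}$ lies again in $\mathfrak{D}_{\rm res}(\mathcal{H})$ — are inherited by restriction and need no separate argument. (For a self-contained treatment one applies $a^*Ja=J$ to the operator column $\binom{Z}{\operatorname{id}_{\mathcal{H}_-}}$ to obtain $(\bar hZ+\bar g)^*(\bar hZ+\bar g)-(gZ+h)^*(gZ+h)=\operatorname{id}_{\mathcal{H}_-}-Z^*Z\succ0$; invertibility of $\bar hZ+\bar g$ then follows since it is a compact perturbation of the identity which is bounded below, contractivity of the image is immediate, and symmetry together with the Hilbert--Schmidt property follow from the remaining symplectic relations.) The two genuinely new points are therefore the transitivity of the \emph{smaller} group and the upgrade of the orbit bijection to a diffeomorphism.

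For transitivity I would exhibit an explicit element carrying $0$ to a prescribed $Z\in\mathfrak{D}_{\rm res}(\mathcal{H})$. Writing $\bar Z=Z^*$ (which is exactly the symmetry condition $Z^T=Z$) and recalling that $Z$, being Hilbert--Schmidt, is a compact contraction with $\|Z\|<1$, set
\begin{equation*}
a_Z=\begin{pmatrix} (\operatorname{id}_{\mathcal{H}_+}-ZZ^*)^{-1/2} & Z(\operatorname{id}_{\mathcal{H}_-}-Z^*Z)^{-1/2}\\[1mm] Z^*(\operatorname{id}_{\mathcal{H}_+}-ZZ^*)^{-1/2} & (\operatorname{id}_{\mathcal{H}_-}-Z^*Z)^{-1/2}\end{pmatrix}.
\end{equation*}
A direct check with functional calculus confirms $g^*g-h^T\bar h=\operatorname{id}_{\mathcal{H}_+}$ and $g^*h=h^T\bar g$, so $a_Z\in\operatorname{Sp}(\mathcal{V},\Omega)$, and $\rho(a_Z,0)=h\bar g^{-1}=Z$. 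The crucial membership $a_Z\in\operatorname{Sp}_{1,2}(\mathcal{V},\Omega)$ — the heart of the theorem — holds because $Z$ is Hilbert--Schmidt: then $h=Z(\operatorname{id}_{\mathcal{H}_-}-Z^*Z)^{-1/2}\in L^2(\mathcal{H}_-,\mathcal{H}_+)$, while $ZZ^*\in L^1(\mathcal{H}_+)$ and the scalar function $x\mapsto(1-x)^{-1/2}-1$ vanishes at $0$, so $g-\operatorname{id}_{\mathcal{H}_+}=(\operatorname{id}_{\mathcal{H}_+}-ZZ^*)^{-1/2}-\operatorname{id}_{\mathcal{H}_+}$ is trace class. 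This is precisely where the reduction from $\operatorname{Sp}_{\rm res}$ to $\operatorname{Sp}_{1,2}$ is achieved, and I expect the trace-class estimate on $g$ to be the main obstacle to carry out cleanly.

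The stabilizer of $0$ is then immediate: $\rho(a,0)=h\bar g^{-1}=0$ forces $h=0$, after which the relation $g^*g-h^T\bar h=\operatorname{id}_{\mathcal{H}_+}$ gives $g^*g=\operatorname{id}_{\mathcal{H}_+}$, i.e.\ $g\in\operatorname{U}(\mathcal{H}_+)$, and the defining condition of $\operatorname{Sp}_{1,2}(\mathcal{V},\Omega)$ adds $g-\operatorname{id}_{\mathcal{H}_+}\in L^1(\mathcal{H}_+)$. Hence $\operatorname{Sp}_{1,2}(\mathcal{V},\Omega)_0\simeq\operatorname{U}_1(\mathcal{H}_+)$ via $a\mapsto g$.

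Finally, for the diffeomorphism I would observe that the assignment $Z\mapsto a_Z$ constructed above is smooth — each block is a smooth function of $Z$ through holomorphic functional calculus — and is a global section of the orbit map $\pi:\operatorname{Sp}_{1,2}(\mathcal{V},\Omega)\to\mathfrak{D}_{\rm res}(\mathcal{H})$, $\pi(a)=h\bar g^{-1}$, since $\pi\circ(Z\mapsto a_Z)=\operatorname{id}_{\mathfrak{D}_{\rm res}(\mathcal{H})}$. Standard Banach--Lie homogeneous-space theory then applies: the existence of such a smooth (here global) section shows that $\operatorname{U}_1(\mathcal{H}_+)$ is a Lie subgroup for which $\pi$ is a submersion, that $\operatorname{Sp}_{1,2}(\mathcal{V},\Omega)/\operatorname{U}_1(\mathcal{H}_+)$ carries a smooth Banach-manifold structure, and that the induced map \eqref{diffeo1} is a bijective submersion, hence a diffeomorphism, with smooth inverse $Z\mapsto[a_Z]$. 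The only delicate point here is the compatibility of the quotient-manifold topology with the section, which the explicit $a_Z$ resolves.
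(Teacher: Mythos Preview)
Your proof is correct. Both you and the paper establish transitivity by exhibiting the \emph{same} explicit element carrying $0$ to $Z$, but reached by different computations: the paper writes it as the exponential
\[
\exp\begin{pmatrix}0&A_Z\\\bar A_Z&0\end{pmatrix},\qquad A_Z=Z\,\frac{\operatorname{argtanh}|Z|}{|Z|},
\]
and checks that the diagonal block $\cosh|A_Z|=\sum_{n\ge0}\frac{|A_Z|^{2n}}{(2n)!}$ lies in $\operatorname{id}+L^1$ because $|A_Z|^2\in L^1$; you instead write the element directly as $a_Z$ with diagonal block $(\operatorname{id}-ZZ^*)^{-1/2}$ and invoke the binomial series in $ZZ^*\in L^1$. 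Since $\cosh(\operatorname{argtanh} t)=(1-t^2)^{-1/2}$, these are literally the same matrix and the trace-class arguments are equivalent. Your route is slightly more elementary in that it bypasses the exponential; the paper's route, on the other hand, makes the link to the complement $\mathfrak m=\{\begin{smallmatrix}0&A\\\bar A&0\end{smallmatrix}:A^T=A\}$ transparent, which is the same complement used later to see that the quotient is a smooth manifold. For the diffeomorphism statement the paper simply refers back to \cite{GRT_proceedings}; your global-section argument via $Z\mapsto a_Z$ is more self-contained and is a legitimate way to finish.
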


\begin{proof} 
Let us show that the action is transitive. A direct computation shows that
\[
\exp \begin{pmatrix}
0       &A\\
A^* &0\end{pmatrix} = \begin{pmatrix}
\cosh |A|      & A \frac{\sinh |A|}{|A|}\\
A^*\frac{\sinh |A^*|}{|A^*|} &\cosh |A^*| \end{pmatrix}, 
\]
where, for $A \in L^2(\mathcal{H}_-,\mathcal{H}_+)$, $|A| = \left(A^*A\right)^{\frac{1}{2}}$ and $|A^*| = \left(A A^*\right)^{\frac{1}{2}}$.
In particular, for $A \in L^2(\mathcal{H}_-,\mathcal{H}_+)$ such that $A^T = A$, one has $A^* = \bar{A}$ and 
\[
\exp \begin{pmatrix}
0       &A\\
\bar{A} &0\end{pmatrix}= \begin{pmatrix}
\cosh |A|      & A \frac{\sinh |A|}{|A|}\\
 \bar{A} \frac{\sinh |\bar{A}|}{|\bar{A}|}  &\cosh |\bar{A}| \end{pmatrix}\in \operatorname{Sp}_{1,2}(\mathcal{V},\Omega),
\]
since $ A \frac{\sinh |A|}{|A|}\in L^2(\mathcal{H}_-,\mathcal{H}_+)$ and $\cosh |A| = \sum_{n = 0}^\infty \frac{|A|^{2n}}{(2n)!} \in \operatorname{id}_{\mathcal{H}_+} + L^1(\mathcal{H}_+)$.
\[
\exp \begin{pmatrix}
0       &A\\
\bar{A} &0\end{pmatrix}\cdot 0 = \begin{pmatrix}
\cosh |A|      & A\frac{\sinh |A|}{|A|}\\
\frac{\sinh |A|}{|A|}\bar{A}  &\cosh |A| \end{pmatrix}\cdot 0 = A \frac{\sinh |A|}{|A|}\left(\cosh |A|\right)^{-1} = A |A|^{-1} \tanh |A|.
\]
For $Z\in L^2(\mathcal{H}_-,\mathcal{H}_+)$ such that 
$\operatorname{id}_{\mathcal{H}_-}-Z^*Z\succ 0$, define
\begin{equation}\label{action_transitive}
A_Z:=Z\frac{\operatorname{argtanh}|Z|}{|Z|}=
Z\sum_{k=0}^\infty\frac{1}{2k+1}(Z^*Z)^k\in 
L^2(\mathcal{H}_-,\mathcal{H}_+).
\end{equation}
When $Z^T=Z$ we have $A_Z^T=A_Z$. One has 
\[
|A_Z|^2  = A_Z^* A_Z = \frac{\operatorname{argtanh}|Z|}{|Z|} Z^*  Z\frac{\operatorname{argtanh}|Z|}{|Z|} = \left(\operatorname{argtanh}|Z|\right)^2,
\]
hence $|A_Z| = \operatorname{argtanh}|Z|$ and
\[
A_Z |A_Z|^{-1} \tanh |A_Z| = Z\frac{\operatorname{argtanh}|Z|}{|Z|}\left(\operatorname{argtanh}|Z|\right)^{-1} \tanh  \left(\operatorname{argtanh}|Z|\right) = Z.
\]
This proves that $
\exp \begin{pmatrix}
0       &A\\
\bar{A} &0\end{pmatrix}\cdot 0  = Z,
$
hence the action of $\operatorname{Sp}_{1,2}(\mathcal{V},\Omega)$ is transitive.
The remainder of the proof is straighforward and follows from the proof of Theorem~1 in \cite{GRT_proceedings}.
\end{proof}

\section{The restricted Siegel Disc as K\"ahler quotient}

\subsection{The group $\operatorname{GL}_{1,2}(\mathcal{V})$ as open set in a weak K\"ahler space} Infinite-dimensional symplectic and K\"ahler quotients in the Banach context were studied in details in \cite{Tum_Gr}. K\"ahler manifolds possess a Riemannian metric, a symplectic form and a compatible formally integrable structure \cite[Definition~2.12]{Tum_Gr}, see also \cite{GLT,GLTNN}. Consider the affine space $\mathcal{M}_{1,2}(\mathcal{V}) := \textrm{id}_{\mathcal{H}} + \mathfrak{gl}_{1,2}(\mathcal{V}).$
\begin{proposition}
The affine space $\mathcal{M}_{1,2}(\mathcal{V}) := \textrm{id}_{\mathcal{H}} + \mathfrak{gl}_{1,2}(\mathcal{V})$ is a weak K\"ahler space with the following scalar product $\textrm{g}_{\mathcal{M}}$, complex structure $J_{\mathcal{M}}$ and symplectic bilinear form $\omega_{\mathcal{M}}$:
\[
\left\{\begin{array}{l}
\textrm{g}_{\mathcal{M}}(V_1, V_2) =  \Tr V_1^* V_2\\
\operatorname{J}_{\mathcal{M}}V_1 = JV_1\\
\omega_{\mathcal{M}}(V_1, V_2) = \Tr V_1^* J V_2.
\end{array}\right.
\]
where the tangent space to $\mathcal{M}_{1,2}(\mathcal{V})$ at any $M$ is identified with $\mathfrak{gl}_{1,2}(\mathcal{V}).$
\end{proposition}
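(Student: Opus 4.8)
The plan is to verify directly the axioms of a weak K\"ahler space in the sense of \cite[Definition~2.12]{Tum_Gr}. Since $\mathcal{M}_{1,2}(\mathcal{V})$ is an affine space whose tangent space at every point is canonically the fixed Banach Lie algebra $\mathfrak{gl}_{1,2}(\mathcal{V})$, and since the three tensors $\textrm{g}_{\mathcal{M}}$, $J_{\mathcal{M}}$ and $\omega_{\mathcal{M}}$ are \emph{constant} (independent of the base point $M$), the whole statement reduces to a pointwise verification on the model space $\mathfrak{gl}_{1,2}(\mathcal{V})$, together with the observation that constancy trivialises all differential conditions (closedness of $\omega_{\mathcal{M}}$ and formal integrability of $J_{\mathcal{M}}$). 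I would therefore organise the proof into (i) well-definedness of the traces, (ii) the metric and symplectic compatibility at a point, and (iii) the constancy arguments.

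The central point, and the only genuinely analytic one, is step (i). I would write $V_1 = \begin{pmatrix} A_1 & A_2 \\ \bar{A}_2 & \bar{A}_1\end{pmatrix}$ and $V_2 = \begin{pmatrix} C_1 & C_2 \\ \bar{C}_2 & \bar{C}_1\end{pmatrix}$ in $\mathfrak{gl}_{1,2}(\mathcal{V})$, expand the two diagonal blocks of $V_1^* V_2$, and observe that each summand is either a product of two trace class operators (the $A_1$-, $C_1$-parts, in $L^1(\mathcal{H}_\pm)$) or a product of two Hilbert-Schmidt operators (the $A_2$-, $C_2$-parts), using $L^2 \cdot L^2 \subset L^1$. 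Hence $V_1^* V_2 \in L^1(\mathcal{H})$ and $\Tr V_1^* V_2$ is finite; in particular $\textrm{g}_{\mathcal{M}}(V,V) = \Tr V^*V = \|V\|_2^2$ is the squared Hilbert-Schmidt norm, so $\textrm{g}_{\mathcal{M}}$ is positive definite but induces only the Hilbert-Schmidt topology, strictly coarser than the modelling norm $\|\cdot\|_{1,2}$ --- this is precisely what makes the K\"ahler structure \emph{weak}. To see that the traces are real (so that $\textrm{g}_{\mathcal{M}}$ and $\omega_{\mathcal{M}}$ are genuine real tensors), I would use that every $V \in \mathfrak{gl}_{1,2}(\mathcal{V})$ commutes with the conjugation $C$ of $\mathcal{H}$ fixing $\mathcal{V}$ pointwise; since $C$ is antiunitary one has $C V^* C = V^*$ and $\Tr (C T C) = \overline{\Tr T}$, whence $\overline{\Tr V_1^* V_2} = \Tr V_1^* V_2$.

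For the pointwise compatibility in step (ii) I would use only $J^2 = -\operatorname{id}_{\mathcal{H}}$, $J^* = -J$ and the identity $\Tr T^* = \overline{\Tr T}$. First, left multiplication by $J$ sends $\begin{pmatrix} A_1 & A_2 \\ \bar{A}_2 & \bar{A}_1\end{pmatrix}$ to $\begin{pmatrix} iA_1 & iA_2 \\ -i\bar{A}_2 & -i\bar{A}_1\end{pmatrix}$, which is again of the admissible block form and respects the ideals $L^1$ and $L^2$, so $J_{\mathcal{M}}$ preserves $\mathfrak{gl}_{1,2}(\mathcal{V})$ and $J_{\mathcal{M}}^2 = -\operatorname{id}$. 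Then $\textrm{g}_{\mathcal{M}}(J_{\mathcal{M}} V_1, J_{\mathcal{M}} V_2) = \Tr V_1^* J^* J V_2 = \Tr V_1^* V_2 = \textrm{g}_{\mathcal{M}}(V_1, V_2)$, and $\omega_{\mathcal{M}}(V_1, V_2) = \Tr V_1^* J V_2 = \textrm{g}_{\mathcal{M}}(V_1, J_{\mathcal{M}} V_2)$ gives both the compatibility relation and (since $J_{\mathcal{M}} V_2 \in \mathfrak{gl}_{1,2}(\mathcal{V})$) reality of $\omega_{\mathcal{M}}$; skew-symmetry follows from $\omega_{\mathcal{M}}(V_2, V_1) = \overline{\Tr V_2^* J V_1} = \Tr V_1^* J^* V_2 = -\omega_{\mathcal{M}}(V_1, V_2)$, and weak non-degeneracy from positive definiteness of $\textrm{g}_{\mathcal{M}}$ together with invertibility of $J_{\mathcal{M}}$. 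For step (iii), because $\omega_{\mathcal{M}}$ is a constant $2$-form on an affine space it is automatically closed, and because the $\pm i$-eigenbundle of the constant operator $J_{\mathcal{M}}$ is a constant, hence involutive, subbundle, $J_{\mathcal{M}}$ is formally integrable; assembling these facts yields the weak K\"ahler structure.

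I expect the main obstacle to be the book-keeping in step (i): making the trace-class membership of $V_1^* V_2$ airtight requires tracking which blocks land in $L^1$ versus $L^2$ and invoking the correct ideal multiplication rules, and one must separately confirm reality of the traces so that $\textrm{g}_{\mathcal{M}}$ and $\omega_{\mathcal{M}}$ are real-valued. Everything differential is free by constancy, so no genuine integrability computation is needed.
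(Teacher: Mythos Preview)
Your proof is correct and follows essentially the same route as the paper: show that $\mathfrak{gl}_{1,2}(\mathcal{V})$ is stable under the adjoint, under multiplication, and under left multiplication by $J$, so that $V_1^*V_2$ and $V_1^*JV_2$ have well-defined real traces; then deduce skew-symmetry from $J^*=-J$ and non-degeneracy from positivity. Your argument is in fact more thorough than the paper's terse version --- you make the weakness of the metric explicit by comparing $\|\cdot\|_2$ with $\|\cdot\|_{1,2}$, and you spell out that closedness of $\omega_{\mathcal{M}}$ and formal integrability of $J_{\mathcal{M}}$ are automatic by constancy, points the paper leaves implicit.
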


\begin{proof}
Since
$
\begin{pmatrix}
A_1            &A_2\\
\bar{A}_2 &\bar{A}_1\end{pmatrix}^* = \begin{pmatrix}
\bar{A_1}^T            &A_2^T\\
\bar{A}_2^T &A_1^T\end{pmatrix},
$
the Banach space
$\mathfrak{gl}_{1,2}(\mathcal{V})$ is stable by the adjoint $*$ with respect to the Hermitian scalar product $\langle\cdot, \cdot\rangle_{\mathcal{H}}$.  Since it is also stable by product, $V_1^* V_2$ belong to 
$\mathfrak{gl}_{1,2}(\mathcal{V})$
for any $V_1, V_2 \in \mathfrak{gl}_{1,2}(\mathcal{V})$. Therefore $\textrm{g}_{\mathcal{M}}$ takes real values, and is clearly non-degenerate. Moreover, $J$ stabilizes $\mathfrak{gl}_{1,2}(\mathcal{V})$, i.e. $J V_2\in \mathfrak{gl}_{1,2}(\mathcal{V})$ for any $V_2 \in \mathfrak{gl}_{1,2}(\mathcal{V})$. Therefore $\omega_{\mathcal{M}}$ is also real-valued. Consequently  $\Tr (V_1^* J V_2) =  \Tr (V_1^* J V_2)^*  = \Tr V_2^* J^* V_1$. The anti-symmetry then follows from $J^* = -J$. The non-degeneracy of $\omega_{\mathcal{M}}$ is clear.
\end{proof}

\begin{corollary}\label{open}
As open set in $\mathcal{M}_{1,2}(\mathcal{V})$, $\operatorname{GL}_{1,2}(\mathcal{V})$ inherits a flat K\"ahler structure.
\end{corollary}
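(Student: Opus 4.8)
The plan is to realize $\operatorname{GL}_{1,2}(\mathcal{V})$ as the open subset $\mathcal{M}_{1,2}(\mathcal{V}) \cap \operatorname{GL}(\mathcal{H})$ of the weak K\"ahler affine space $\mathcal{M}_{1,2}(\mathcal{V})$ from the preceding proposition, and then to obtain the K\"ahler structure on $\operatorname{GL}_{1,2}(\mathcal{V})$ simply by restricting the constant tensors $(\mathrm{g}_{\mathcal{M}}, J_{\mathcal{M}}, \omega_{\mathcal{M}})$.

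First I would verify the set-theoretic identity $\operatorname{GL}_{1,2}(\mathcal{V}) = \mathcal{M}_{1,2}(\mathcal{V}) \cap \operatorname{GL}(\mathcal{H})$. Every element $\begin{pmatrix} g & h \\ \bar{h} & \bar{g} \end{pmatrix}$ of the group satisfies $g - \operatorname{id}_{\mathcal{H}_+} \in L^1(\mathcal{H}_+)$ and $h \in L^2(\mathcal{H}_-, \mathcal{H}_+)$, so subtracting $\operatorname{id}_{\mathcal{H}}$ produces an element $\begin{pmatrix} g - \operatorname{id}_{\mathcal{H}_+} & h \\ \bar{h} & \bar{g} - \operatorname{id}_{\mathcal{H}_-} \end{pmatrix}$ of $\mathfrak{gl}_{1,2}(\mathcal{V})$; conversely every point of $\mathcal{M}_{1,2}(\mathcal{V})$ already carries exactly this block form. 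Hence a point of $\mathcal{M}_{1,2}(\mathcal{V})$ lies in $\operatorname{GL}_{1,2}(\mathcal{V})$ if and only if it is invertible as a bounded operator on $\mathcal{H}$, which is the claimed identity.

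Next I would establish openness, which is the only real content of the statement. The key step is that the inclusion of the model space $\bigl(\mathfrak{gl}_{1,2}(\mathcal{V}), \|\cdot\|_{1,2}\bigr) \hookrightarrow \bigl(\mathfrak{gl}(\mathcal{H}), \|\cdot\|_{\mathrm{op}}\bigr)$ is continuous: since the trace-class and Hilbert--Schmidt norms dominate the operator norm, i.e.\ $\|A_1\|_{\mathrm{op}} \le \|A_1\|_1$ and $\|A_2\|_{\mathrm{op}} \le \|A_2\|_2$, and since conjugation is isometric for the operator norm, a crude block estimate yields $\|A\|_{\mathrm{op}} \le 2\,\|A\|_{1,2}$ for every $A \in \mathfrak{gl}_{1,2}(\mathcal{V})$. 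Consequently the affine inclusion $\mathcal{M}_{1,2}(\mathcal{V}) \hookrightarrow \mathfrak{gl}(\mathcal{H})$ is continuous, and $\operatorname{GL}_{1,2}(\mathcal{V})$, being the preimage of the operator-norm-open set $\operatorname{GL}(\mathcal{H})$, is open in $\mathcal{M}_{1,2}(\mathcal{V})$.

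Finally, since an open subset of a (weak) K\"ahler Banach manifold inherits the ambient structure by restriction, $\operatorname{GL}_{1,2}(\mathcal{V})$ carries the restrictions of $\mathrm{g}_{\mathcal{M}}$, $J_{\mathcal{M}}$ and $\omega_{\mathcal{M}}$, with tangent space at each point still identified with $\mathfrak{gl}_{1,2}(\mathcal{V})$. Flatness is then immediate: these tensors are constant, i.e.\ translation-invariant, on the affine space $\mathcal{M}_{1,2}(\mathcal{V})$, so the associated Levi-Civita connection is the trivial one and the curvature vanishes, a property that passes to any open subset. I do not expect a serious obstacle here; the one point demanding care is the norm domination $\|\cdot\|_{\mathrm{op}} \le 2\,\|\cdot\|_{1,2}$, since it is precisely what guarantees that invertibility is an open condition in the finer $\|\cdot\|_{1,2}$-topology and not merely in the operator-norm topology.
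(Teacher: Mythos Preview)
Your argument is correct. The paper states this corollary without proof, treating it as an immediate consequence of the preceding proposition; your write-up simply makes explicit the two points the paper leaves implicit, namely that $\operatorname{GL}_{1,2}(\mathcal{V}) = \mathcal{M}_{1,2}(\mathcal{V}) \cap \operatorname{GL}(\mathcal{H})$ and that the $\|\cdot\|_{1,2}$-topology is finer than the operator-norm topology, so invertibility is an open condition.
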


\begin{remark}
Note that this K\"ahler structure is not invariant by left and right multiplications by arbitrary elements in $\operatorname{GL}_{1,2}(\mathcal{V})$.
\end{remark}

\subsection{Momentum map for the right action of  $\operatorname{O}_{1,2}(\mathcal{V})$ on $\mathcal{M}_{1,2}(\mathcal{V})$}

Recall that a \textit{momentum map} for the action of a Lie group $G$ on the symplectic manifold $\left(\mathcal{M}_{1,2}(\mathcal{V}), \omega_{\mathcal{M}}\right)$ is a function $\mu_{G}$ on $\mathcal{M}_{1,2}(\mathcal{V})$ with values in the dual $\mathfrak{g}^*$ of the Lie algebra $\mathfrak{g}$ of $G$ such that the differential $d\mu_{G}$ of $\mu_{G}$ satisfies
\begin{equation}\label{moment}
\langle d_M\mu_{G}(V) |A\rangle_{G} = \omega_{\mathcal{M}}(X^A(M), V),
\end{equation}
where $M\in \mathcal{M}_{1,2}(\mathcal{V})$,  $V\in T_M\mathcal{M}_{1,2}(\mathcal{V})$ is a tangent vector to $\mathcal{M}_{1,2}(\mathcal{V})$ at $M$, and $X^A$ denotes the vector field generated by $A\in \mathfrak{g}$ on $\mathcal{M}_{1,2}(\mathcal{V})$.
The momentum map of a right action is called \textit{equivariant} when
$\mu_G(M \cdot g) = \operatorname{Ad}^*(g)\left(\mu_{G}(M)\right),$
where $\operatorname{Ad}^*$ denotes the coadjoint action of $G$ on $\mathfrak{g}^*$ defined by $\operatorname{Ad}^*(g)\xi(A) = \xi(\operatorname{Ad}(g)(A))$, $\xi \in \mathfrak{g}^*$, $A\in \mathfrak{g}$.

\begin{theorem}\label{momentum map}
The right action of $\operatorname{O}_{1,2}(\mathcal{V}, \Omega)$ on $\mathcal{M}_{1,2}(\mathcal{V})$ admits an moment map $\mu_{\operatorname{O}}: \mathcal{M}_{1,2}(\mathcal{V})  \longrightarrow  \mathfrak{o}_{1,2}^*(\mathcal{V}, \Omega)$ defined by 
\begin{equation}\label{SP_moment}
 \langle\mu_{\operatorname{O}}(M)| A\rangle_{\operatorname{O}} := -\frac{1}{2}\Tr  \left(M^* J M - J\right) A
\end{equation}
where $M\in \mathcal{M}_{1,2}(\mathcal{V})$, $A\in \mathfrak{o}_{1,2}(\mathcal{V}, \Omega)$, and where $\langle\cdot | \cdot \rangle_{\operatorname{o}}$ denotes the duality pairing between $\mathfrak{o}_{1,2}^*(\mathcal{V})$ and $\mathfrak{o}_{1,2}(\mathcal{V})$.
\end{theorem}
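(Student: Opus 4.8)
The plan is to verify the defining relation \eqref{moment} for the momentum map by a direct computation, the only subtlety being the infinite-dimensional bookkeeping needed to keep all traces finite. First I would identify the fundamental vector field of the right action. For $A\in\mathfrak{o}_{1,2}(\mathcal{V})$ and $M\in\mathcal{M}_{1,2}(\mathcal{V})$, the right action is $M\cdot g = Mg$, so the generating vector field is
\[
X^A(M)=\frac{d}{dt}\Big|_{t=0} M\exp(tA)=MA\in\mathfrak{gl}_{1,2}(\mathcal{V}),
\]
which lies in the tangent space because $\mathfrak{gl}_{1,2}(\mathcal{V})$ is an algebra. This already uses the product-stability of $\mathfrak{gl}_{1,2}(\mathcal{V})$ established in the previous proposition, and it is what guarantees that $X^A(M)$ is an admissible tangent vector.

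Next I would differentiate the proposed function. Writing $f_A(M):=-\tfrac12\Tr\big((M^*JM-J)A\big)$, I compute its differential along a tangent vector $V\in\mathfrak{gl}_{1,2}(\mathcal{V})$:
\[
\langle d_M\mu_{\operatorname{O}}(V)\,|\,A\rangle_{\operatorname{O}}
=-\tfrac12\Tr\big((V^*JM+M^*JV)A\big).
\]
Here one must check that each summand is separately trace class so that the sum may be split; this follows because $M=\operatorname{id}_{\mathcal H}+A_1$ with $A_1\in\mathfrak{gl}_{1,2}$, and a careful multiplication of block operators in $L^1$ and $L^2$ keeps the off-diagonal blocks Hilbert--Schmidt and the diagonal corrections trace class, so the product with $A\in\mathfrak{o}_{1,2}$ is trace class. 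I would then compute the right-hand side of \eqref{moment}:
\[
\omega_{\mathcal{M}}(X^A(M),V)=\Tr\big((MA)^*JV\big)=\Tr\big(A^*M^*JV\big).
\]

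To match the two expressions I would use cyclicity of the trace together with the relation $A^*=-A$ valid for $A\in\mathfrak{o}_{1,2}(\mathcal{V})$ and the skew-adjointness $J^*=-J$ already recorded in the excerpt. Concretely, in $\omega_{\mathcal{M}}(X^A(M),V)=-\Tr(AM^*JV)=-\Tr(M^*JVA)$ after cycling, while in the differential the term $-\tfrac12\Tr(M^*JVA)$ appears directly and the term $-\tfrac12\Tr(V^*JMA)$ should be rewritten via $\Tr(X)=\Tr(X^*)$ and the identities $A^*=-A$, $J^*=-J$ into a second copy of $-\tfrac12\Tr(M^*JVA)$; this is where the constant $-\tfrac12$ is forced. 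The additive constant $J$ inside $M^*JM-J$ is annihilated by the differential, so it plays no role in \eqref{moment} and is present only to normalize the value $\mu_{\operatorname{O}}(\operatorname{id})=0$.

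The main obstacle is not the algebraic identity, which is routine once the constant is pinned down, but rather justifying that every trace appearing is well-defined and that the formula \eqref{SP_moment} genuinely lands in $\mathfrak{o}_{1,2}^*(\mathcal{V})$, i.e. that $A\mapsto f_A(M)$ is a \emph{continuous} linear functional on $\mathfrak{o}_{1,2}(\mathcal{V})$ with respect to its $\|\cdot\|_{1,2}$ norm. I would address this by decomposing $M^*JM-J$ into its block components and checking, block by block, that pairing against $A\in\mathfrak{o}_{1,2}$ produces a finite trace bounded by $\|A\|_{1,2}$; the diagonal blocks of $M^*JM-J$ must be trace class and the off-diagonal blocks Hilbert--Schmidt, so that the diagonal--diagonal pairings use the $L^1$--$L(\mathcal H)$ duality and the off-diagonal--off-diagonal pairings use the $L^2$--$L^2$ duality. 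Once this continuity and finiteness are secured, \eqref{moment} reduces to the trace manipulation above and the theorem follows.
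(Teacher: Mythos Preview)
Your proposal is correct and follows essentially the same route as the paper: compute the fundamental vector field $X^A(M)=MA$, differentiate $\mu_{\operatorname{O}}$ to get $-\tfrac12\Tr(V^*JM+M^*JV)A$, and then use $A^*=-A$, $J^*=-J$ together with the relation $\Tr X=\Tr X^*$ to collapse the two terms into $-\Tr(V^*JMA)=\omega_{\mathcal M}(MA,V)$. The only cosmetic difference is emphasis: the paper justifies the identity $\Tr X=\Tr X^*$ by noting explicitly that $AM^*JV\in\mathfrak{gl}_{1,2}(\mathcal V)$ forces the trace to be real (which you should also state, since in general $\Tr X^*=\overline{\Tr X}$), whereas you spend more space on the continuity of $A\mapsto\langle\mu_{\operatorname O}(M)\,|\,A\rangle_{\operatorname O}$, a point the paper leaves implicit.
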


\begin{proof}
First note that, for $M = \textrm{id}_\mathcal{H} + X$ with $X \in \mathfrak{gl}_{1,2}(\mathcal{V})$, one has
\[
M^* J  M - J 
= \left(J + JX + X^* J + X^* J X \right)  - J = JX + X^* J + X^* J X,
\]
hence $M^* J  M - J$ belongs to $\mathfrak{gl}_{1,2}(\mathcal{V})$. Therefore $\left(M^* J  M - J\right) A$ is trace class for any $A\in \mathfrak{o}_{1,2}(\mathcal{V})$. This proves that $\mu_{\operatorname{O}}$ is well-defined. The differential of $\mu_{\operatorname{O}}$ at $M \in \mathcal{M}_{1,2}(\mathcal{V})$ reads
\[
\langle d_M\mu_{\operatorname{O}}(V)| A\rangle_{\operatorname{O}} 
:= -\frac{1}{2}\Tr  \left(V^* J  M + M^* J V\right) A 
= -\frac{1}{2}\left[\Tr  \left(V^* J  M A\right) + \Tr  \left(A M^* J  V \right)\right].
\]
Since $\left(A M^* J  V\right)\in \mathfrak{gl}_{1,2}(\mathcal{V})$, $\Tr \left(A M^* J  V\right)$ is real and equals $\Tr \left(A M^* J  V\right)^*$, hence
\begin{align*}
\langle d_M\mu_{\operatorname{Sp}}(V)| A\rangle_{\operatorname{Sp}} 
&= -\frac{1}{2}\left[\Tr  \left(V^* J  M A \right) + \Tr  \left(V^* J^*  M A^*\right)\right]
= -\frac{1}{2}\left[\Tr  \left(V^* J  M A \right) - \Tr  \left(V^* J  M A^*\right)\right],
\end{align*}
where we have used $J^* = -J$.
Since  for $A \in \mathfrak{o}_{1,2}(\mathcal{V}) $
one has $A^{*} + A = 0$, 
\begin{align*}
\langle d_M\mu_{\operatorname{O}}(V)| A\rangle_{\operatorname{O}} 
& = -\left[ \Tr  \left(V^* J  M A\right)\right]
 = -\omega_{\mathcal{M}}(V, M A) = \omega_{\mathcal{M}}(M A, V).
\end{align*}
\end{proof}
\begin{theorem}\label{non-equi}
The moment map $\mu_{\operatorname{O}}$ is \textit{not} equivariant. One has 
\[
\langle \mu_{\operatorname{O}}(M\cdot a) - \operatorname{Ad}^*(a)\left(\mu_{\operatorname{O}}(M)\right) | A\rangle = 
-\frac{1}{2}\Tr(a^{-1} J a -  J) A
\]
\end{theorem}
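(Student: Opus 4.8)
The plan is to prove the identity by evaluating both $\mu_{\operatorname{O}}(M\cdot a)$ and $\operatorname{Ad}^*(a)\bigl(\mu_{\operatorname{O}}(M)\bigr)$ against an arbitrary $A\in\mathfrak{o}_{1,2}(\mathcal{V})$ directly from the defining formula \eqref{SP_moment}, and then subtracting. The computation is driven by two facts: first, that $a\in\operatorname{O}_{1,2}(\mathcal{V})$ satisfies $a^*a=\operatorname{id}_{\mathcal{H}}$ by \eqref{O12}, so that $a^{-1}=a^*$; and second, that the trace is cyclic on the relevant products (legitimate because, as in the proof of Theorem~\ref{momentum map}, each bracketed operator lies in $\mathfrak{gl}_{1,2}(\mathcal{V})$ and is therefore multiplied into a trace-class operator). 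I expect the $M$-dependent contributions to cancel, leaving only a term built from $a$ and $J$.

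First I would unwind the left-hand term. Since the action is $M\cdot a=Ma$, substituting $(Ma)^*=a^*M^*$ into \eqref{SP_moment} gives
\[
\langle \mu_{\operatorname{O}}(Ma) \mid A\rangle_{\operatorname{O}} = -\tfrac12 \Tr\bigl(a^* M^* J M a - J\bigr) A.
\]
Next I would unwind the coadjoint term using $\operatorname{Ad}^*(a)\xi(A)=\xi(\operatorname{Ad}(a)A)$ with $\operatorname{Ad}(a)A=aAa^{-1}$, followed by cyclicity and the substitution $a^{-1}=a^*$:
\[
\langle \operatorname{Ad}^*(a)\mu_{\operatorname{O}}(M) \mid A\rangle_{\operatorname{O}} = -\tfrac12 \Tr\bigl(M^* J M - J\bigr) a A a^{-1} = -\tfrac12 \Tr\bigl(a^* M^* J M a - a^* J a\bigr) A.
\]

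Subtracting the two expressions, the terms $-\tfrac12\Tr\bigl(a^*M^*JMa\bigr)A$ occur with opposite signs and cancel, so that the entire $M$-dependence disappears and what survives is
\[
\langle \mu_{\operatorname{O}}(Ma) - \operatorname{Ad}^*(a)\mu_{\operatorname{O}}(M) \mid A\rangle_{\operatorname{O}} = -\tfrac12\Tr\bigl(a^* J a - J\bigr)A = -\tfrac12\Tr\bigl(a^{-1} J a - J\bigr)A,
\]
which is exactly the claimed cocycle.

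The two points requiring care are bookkeeping rather than genuine obstacles. The first is the finiteness of every trace written: this reduces to $a^{-1}Ja-J=a^{-1}[J,a]=a^{-1}[J,a-\operatorname{id}_{\mathcal{H}}]\in\mathfrak{gl}_{1,2}(\mathcal{V})$, since $[J,\,\cdot\,]$ annihilates the diagonal blocks and merely rescales the Hilbert--Schmidt off-diagonal blocks of $a-\operatorname{id}_{\mathcal{H}}$, while left multiplication by the bounded operator $a^{-1}$ preserves membership in $\mathfrak{gl}_{1,2}(\mathcal{V})$. The second is confirming that the cocycle does not vanish identically, so that the failure of equivariance is genuine: the relation $a^{-1}Ja=J$ forces $[J,a]=0$, i.e. the off-diagonal block of $a$ to vanish, singling out precisely the subgroup $\operatorname{U}_1(\mathcal{H}_+)$; hence any $a\in\operatorname{O}_{1,2}(\mathcal{V})$ with nonzero off-diagonal block produces $a^{-1}Ja-J\neq0$ and, paired against a suitable $A$, a nonzero right-hand side.
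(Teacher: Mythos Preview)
Your proof is correct and follows exactly the same route as the paper's: compute $\mu_{\operatorname{O}}(Ma)$ and $\operatorname{Ad}^*(a)\mu_{\operatorname{O}}(M)$ from \eqref{SP_moment}, use cyclicity of the trace together with $a^*=a^{-1}$, and subtract. You are in fact more thorough than the paper, which leaves the trace-class bookkeeping and the genuine non-vanishing of the cocycle implicit.
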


\begin{proof}
One has
$
 \langle\mu_{\operatorname{O}}(M\cdot a)| A\rangle_{\operatorname{O}}
  = -\frac{1}{2}\Tr  \left(a^* M^* J M a  - J\right) A, 
$
and
$
 \langle \mu_{\operatorname{O}}(M) | a A a^{-1} \rangle= -\frac{1}{2}\Tr(M^* J  M - J) a A a^{-1}.$
The proposition follows from $a^* =  a^{-1}$, $a\in\operatorname{O}_{1,2}(\mathcal{V}, \Omega)$.
\end{proof}

\subsection{The restricted Siegel disc as  K\"ahler reduced space}
 
 \begin{proposition}\label{prop1}
 The right action of $\operatorname{U}_{1}(\mathcal{H}_+) = \operatorname{O}_{1,2}(\mathcal{V})\cap \operatorname{Sp}_{1,2}(\mathcal{V}, \Omega)$ preserves the K\"ahler structure of $\operatorname{GL}_{1,2}(\mathcal{V})$.
 \end{proposition}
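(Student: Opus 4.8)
The plan is to reduce everything to the tangent level and to check the invariance of the three tensors $\textrm{g}_{\mathcal{M}}$, $J_{\mathcal{M}}$ and $\omega_{\mathcal{M}}$ one at a time. For $a\in\operatorname{U}_1(\mathcal{H}_+)$ the right action is $R_a:M\mapsto Ma$, and since $\operatorname{U}_1(\mathcal{H}_+)\subset\operatorname{GL}_{1,2}(\mathcal{V})$ and $\operatorname{GL}_{1,2}(\mathcal{V})$ is a group, $R_a$ maps $\operatorname{GL}_{1,2}(\mathcal{V})$ into itself. Differentiating the curve $t\mapsto(M+tV)a$ under the identification of each tangent space with $\mathfrak{gl}_{1,2}(\mathcal{V})$, one sees that the tangent map is $d R_a(V)=Va$. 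First I would record that this is well defined on tangent spaces: writing $a=\operatorname{id}_{\mathcal{H}}+X$ with $X\in\mathfrak{gl}_{1,2}(\mathcal{V})$ and using that $\mathfrak{gl}_{1,2}(\mathcal{V})$ is a Banach algebra gives $Va=V+VX\in\mathfrak{gl}_{1,2}(\mathcal{V})$.

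Next I would exploit the two defining relations of $\operatorname{U}_1(\mathcal{H}_+)=\operatorname{O}_{1,2}(\mathcal{V})\cap\operatorname{Sp}_{1,2}(\mathcal{V},\Omega)$, namely $a^*a=\operatorname{id}_{\mathcal{H}}$ and $a^*Ja=J$. Invertibility and the first relation give $a^*=a^{-1}$, hence also $aa^*=\operatorname{id}_{\mathcal{H}}$, and together with the second relation $Ja=aJ$. The metric invariance is then the computation
\[
\textrm{g}_{\mathcal{M}}(Va,Wa)=\Tr (Va)^*(Wa)=\Tr a^*V^*Wa=\Tr V^*Waa^*=\Tr V^*W=\textrm{g}_{\mathcal{M}}(V,W),
\]
using cyclicity of the trace and $aa^*=\operatorname{id}_{\mathcal{H}}$. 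The invariance of the complex structure is immediate, since $J_{\mathcal{M}}$ acts by left multiplication: $d R_a(J_{\mathcal{M}}V)=(JV)a=J(Va)=J_{\mathcal{M}}(d R_a V)$, so $d R_a$ is complex linear. Finally, for the symplectic form I would either repeat the same manipulation, $\omega_{\mathcal{M}}(Va,Wa)=\Tr a^*V^*JWa=\Tr V^*JWaa^*=\omega_{\mathcal{M}}(V,W)$, or, more economically, invoke the compatibility $\omega_{\mathcal{M}}(V,W)=\textrm{g}_{\mathcal{M}}(V,J_{\mathcal{M}}W)$, from which invariance of $\omega_{\mathcal{M}}$ follows formally once $\textrm{g}_{\mathcal{M}}$ and $J_{\mathcal{M}}$ are known to be preserved.

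I do not expect a genuine obstacle, only one point that deserves care: the legitimacy of the cyclic permutation under the trace. The operators appearing, such as $V^*W\in\mathfrak{gl}_{1,2}(\mathcal{V})$, are not trace class as operators on $\mathcal{H}$, but every element of $\mathfrak{gl}_{1,2}(\mathcal{V})$ has trace-class diagonal blocks and Hilbert--Schmidt off-diagonal blocks, so that $\Tr$ is well defined on $\mathfrak{gl}_{1,2}(\mathcal{V})$ and, by decomposing products into these pieces, vanishes on commutators; cyclicity (equivalently, invariance under conjugation by $a$) then follows from the standard cyclicity for trace-class and for Hilbert--Schmidt factors. I would also remark that only the orthogonality relation $aa^*=\operatorname{id}_{\mathcal{H}}$ is actually used here; the symplectic relation $a^*Ja=J$ plays no role for this proposition, but is recorded because it is exactly what is needed in Proposition~\ref{prop2} to preserve the zero level set of the momentum map.
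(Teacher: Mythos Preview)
Your proof is correct and follows essentially the same route as the paper: identify the tangent map of $R_a$ as $V\mapsto Va$, then use $a^*a=\operatorname{id}_{\mathcal{H}}$ and cyclicity of the trace to check invariance of $\textrm{g}_{\mathcal{M}}$, $J_{\mathcal{M}}$ and $\omega_{\mathcal{M}}$ one by one. Your additional care about why the trace is cyclic on $\mathfrak{gl}_{1,2}(\mathcal{V})$ and your observation that only the orthogonality relation (not $a^*Ja=J$) is needed here are welcome refinements that the paper leaves implicit.
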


\begin{proof}
For $a \in \operatorname{U}_{1}(\mathcal{H}_+) = \operatorname{O}_{1,2}(\mathcal{V})\cap \operatorname{Sp}_{1,2}(\mathcal{V}, \Omega)$, one has $a^* J  a = J$ and $a^* = a^{-1}$. Hence, for $V_1, V_2 \in \mathfrak{gl}_{1,2}(\mathcal{V})$, one has
\[
\left\{\begin{array}{l}
\textrm{g}_{\mathcal{M}}(V_1\cdot a, V_2 \cdot a) =  \Tr a^* V_1^*  V_2 a = \Tr V_1^*  V_2 =  \textrm{g}_{\mathcal{M}}(V_1, V_2)\\
\operatorname{J}_{\mathcal{M}}(V_1 \cdot a) = J V_1 a = \operatorname{J}_{\mathcal{M}}(V_1)\cdot a\\
\omega_{\mathcal{M}}(V_1 \cdot a, V_2 \cdot a) = \Tr a^*V_1^* J V_2 a = \Tr V_1^* J V_2 = \omega_{\mathcal{M}}(V_1, V_2) .
\end{array}\right.
\]
\end{proof}

\begin{proposition}\label{prop2}
The right action of $\operatorname{U}_{1}(\mathcal{H}_+) = \operatorname{O}_{1,2}(\mathcal{V})\cap \operatorname{Sp}_{1,2}(\mathcal{V}, \Omega)$ preserves the level set $\mu_{\operatorname{O}}^{-1}(0)\subset \operatorname{GL}_{1,2}(\mathcal{V})$.
\end{proposition}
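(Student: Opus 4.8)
The plan is to deduce the claim directly from the non-equivariance formula of Theorem~\ref{non-equi}, observing that the obstructing cocycle vanishes \emph{precisely} on the subgroup $\operatorname{U}_1(\mathcal{H}_+)$. Concretely, I would take $M \in \mu_{\operatorname{O}}^{-1}(0)$ and $a \in \operatorname{U}_1(\mathcal{H}_+) = \operatorname{O}_{1,2}(\mathcal{V})\cap \operatorname{Sp}_{1,2}(\mathcal{V},\Omega)$, and show that $M\cdot a$ again lies in $\mu_{\operatorname{O}}^{-1}(0)$.

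First I would record the two defining properties of $a$: membership in $\operatorname{Sp}_{1,2}(\mathcal{V},\Omega)$ gives $a^*Ja = J$, while membership in $\operatorname{O}_{1,2}(\mathcal{V})$ gives $a^* = a^{-1}$. Combining these yields $a^{-1}Ja = a^*Ja = J$, so that $a^{-1}Ja - J = 0$. Substituting into the formula of Theorem~\ref{non-equi},
\[
\langle \mu_{\operatorname{O}}(M\cdot a) - \operatorname{Ad}^*(a)\left(\mu_{\operatorname{O}}(M)\right) | A\rangle = -\frac{1}{2}\Tr(a^{-1} J a - J) A = 0
\]
for every $A \in \mathfrak{o}_{1,2}(\mathcal{V})$. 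Hence on the subgroup $\operatorname{U}_1(\mathcal{H}_+)$ the momentum map is genuinely equivariant, that is, $\mu_{\operatorname{O}}(M\cdot a) = \operatorname{Ad}^*(a)\left(\mu_{\operatorname{O}}(M)\right)$.

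To conclude, I would invoke the linearity of the coadjoint action $\operatorname{Ad}^*(a)$ on $\mathfrak{o}_{1,2}^*(\mathcal{V})$, which fixes the zero functional. Thus $\mu_{\operatorname{O}}(M) = 0$ forces $\mu_{\operatorname{O}}(M\cdot a) = \operatorname{Ad}^*(a)(0) = 0$, so $M\cdot a \in \mu_{\operatorname{O}}^{-1}(0)$ as required. Alternatively, one can argue by direct computation: using $a^*Ja = J$ one rewrites $a^*M^*JMa - J = a^*(M^*JM - J)a$, and cyclicity of the trace together with $a^* = a^{-1}$ turns the pairing $\langle \mu_{\operatorname{O}}(M\cdot a)|A\rangle$ into $-\frac{1}{2}\Tr (M^*JM - J)\,aAa^{-1}$, which vanishes since $M \in \mu_{\operatorname{O}}^{-1}(0)$.

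Since there is essentially no analytic difficulty here, the only point requiring care — and the step I would treat as the main (mild) obstacle — is the bookkeeping ensuring every pairing is legitimate: in the direct argument one must check that $aAa^{-1} = \operatorname{Ad}(a)(A)$ still belongs to $\mathfrak{o}_{1,2}(\mathcal{V})$, so that the vanishing of $\mu_{\operatorname{O}}(M)$ against all Lie-algebra elements may be applied to it. This follows from $a \in \operatorname{O}_{1,2}(\mathcal{V})$ preserving its own Lie algebra under conjugation, together with the fact that the relevant products remain trace class, as already established in the proof of Theorem~\ref{momentum map}.
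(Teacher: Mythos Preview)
Your argument is correct and follows exactly the paper's own proof: invoke the non-equivariance formula of Theorem~\ref{non-equi}, use $a^{-1}=a^*$ together with $a^*Ja=J$ to kill the cocycle term, and conclude equivariance on $\operatorname{U}_1(\mathcal{H}_+)$, hence preservation of $\mu_{\operatorname{O}}^{-1}(0)$. Your alternative direct computation and the bookkeeping remark about $aAa^{-1}\in\mathfrak{o}_{1,2}(\mathcal{V})$ are also correct and simply make explicit what the paper leaves implicit.
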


\begin{proof}
It follows from Proposition~\ref{non-equi}, that 
for $a \in \operatorname{U}_{1}(\mathcal{H}_+)= \operatorname{O}_{1,2}(\mathcal{V})\cap \operatorname{Sp}_{1,2}(\mathcal{V}, \Omega)$,
\[
\langle \mu_{\operatorname{O}}(M\cdot a) - \operatorname{Ad}^*(a)\left(\mu_{\operatorname{O}}(M)\right) | A\rangle = 
-\frac{1}{2}\Tr(a^{-1} J a -  J) A = -\frac{1}{2}\Tr(a^{*} J a -  J) A = 0,
\]
for any $A\in \mathfrak{o}_{1,2}(\mathcal{V})$. Hence $\operatorname{U}_{1}(\mathcal{H}_+)$ acts equivariantly on the momentum map. In particular  $\operatorname{U}_{1}(\mathcal{H}_+)$ preserves $\mu_{\operatorname{O}}^{-1}(0)$.
\end{proof}

\begin{theorem}\label{quotient}
The quotient space $\mathcal{Q} := \mu_{\operatorname{O}}^{-1}(0)/\operatorname{U}_{1}(\mathcal{H}_+)$ inherits from  $\operatorname{GL}_{1,2}(\mathcal{V})$ a symplectic and K\"ahler structure and is isomorphic to the restricted Siegel disc:
\[
\mu_{\operatorname{O}}^{-1}(0)/\operatorname{U}_{1}(\mathcal{H}_+)\simeq \operatorname{Sp}_{1,2}(\mathcal{V}, \Omega)/\operatorname{U}_{1}(\mathcal{H}_+) \simeq \mathfrak{D}_{\rm res}(\mathcal{H}) 
\]
\end{theorem}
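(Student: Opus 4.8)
The plan is to reduce the statement to two ingredients: an explicit identification $\mu_{\operatorname{O}}^{-1}(0) = \operatorname{Sp}_{1,2}(\mathcal{V},\Omega)$, and the Banach Kähler reduction theorem of \cite{Tum_Gr}. First I would compute the level set. For $M \in \mathcal{M}_{1,2}(\mathcal{V})$ put $S := M^* J M - J$; the computation in the proof of Theorem~\ref{momentum map} gives $S \in \mathfrak{gl}_{1,2}(\mathcal{V})$, and since $J^* = -J$ one has $S^* = -S$, so in fact $S \in \mathfrak{o}_{1,2}(\mathcal{V})$. By \eqref{SP_moment}, $M \in \mu_{\operatorname{O}}^{-1}(0)$ iff $\Tr(SA) = 0$ for all $A \in \mathfrak{o}_{1,2}(\mathcal{V})$; choosing $A = S$ yields $\Tr(S^2) = -\Tr(S^* S) = -\|S\|_2^2$, and since the blocks of $S$ are trace class hence Hilbert--Schmidt, this forces $S = 0$. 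Thus $\mu_{\operatorname{O}}^{-1}(0) = \{M \in \mathcal{M}_{1,2}(\mathcal{V}) : M^* J M = J\}$; each such $M = \operatorname{id}_{\mathcal{H}} + K$ with $K$ compact is injective (from $M^* J M = J$ and $J$ invertible), hence invertible by the Fredholm alternative, so $\mu_{\operatorname{O}}^{-1}(0) = \operatorname{Sp}_{1,2}(\mathcal{V},\Omega)$.

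The identification of the underlying manifold is then immediate: $\mu_{\operatorname{O}}^{-1}(0)/\operatorname{U}_1(\mathcal{H}_+) = \operatorname{Sp}_{1,2}(\mathcal{V},\Omega)/\operatorname{U}_1(\mathcal{H}_+)$, which by Theorem~\ref{disc restreint de Siegel = espace homogene} is diffeomorphic to $\mathfrak{D}_{\rm res}(\mathcal{H})$ through $[M] \mapsto h\bar g^{-1}$. In particular the quotient carries \emph{a priori} a smooth Banach manifold structure---it is an open subset of the Hilbert space $\mathcal{E}_{\rm res}(\mathcal{H})$---so no separate slice or properness argument is required to make the quotient a manifold.

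It remains to descend the Kähler data, for which I would invoke the reduction theorem of \cite{Tum_Gr}. By Proposition~\ref{prop1} the $\operatorname{U}_1(\mathcal{H}_+)$-action preserves $(\mathrm{g}_{\mathcal{M}}, J_{\mathcal{M}}, \omega_{\mathcal{M}})$, by Proposition~\ref{prop2} it acts equivariantly on and preserves $\mu_{\operatorname{O}}^{-1}(0)$, and it is free since the action is right multiplication by invertible operators. Denoting by $V_p$ the tangent space to the orbit through $p$, relation \eqref{moment} together with $\omega_{\mathcal{M}} = \mathrm{g}_{\mathcal{M}}(\,\cdot\,, J_{\mathcal{M}}\,\cdot\,)$ gives $\ker d_p\mu_{\operatorname{O}} = (J_{\mathcal{M}} V_p)^{\perp}$, so the candidate horizontal space $H_p := (V_p \oplus J_{\mathcal{M}} V_p)^{\perp}$ is manifestly $J_{\mathcal{M}}$-invariant and projects isomorphically onto the tangent space of the quotient; the restrictions of $\mathrm{g}_{\mathcal{M}}$, $J_{\mathcal{M}}$ and $\omega_{\mathcal{M}}$ to $H_p$ are invariant and hence descend to the reduced weak Kähler structure.

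The hard part is precisely this last step in the weak, non-Hilbert setting. The metric $\mathrm{g}_{\mathcal{M}}(V_1,V_2) = \Tr V_1^* V_2$ induces the Hilbert--Schmidt topology, which is strictly weaker than the $\|\cdot\|_{1,2}$ topology of the model space, so orthogonal complements need not exist and the naive horizontal splitting may fail. One must verify that $V_p \oplus J_{\mathcal{M}} V_p$ is genuinely complemented, that $H_p$ is closed and complementary to $V_p$ inside $\ker d_p\mu_{\operatorname{O}}$, and that $\omega_{\mathcal{M}}|_{H_p}$ stays weakly non-degenerate after reduction---exactly the hypotheses isolated in \cite{Tum_Gr}. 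I expect these to follow from the explicit block form of $\mathfrak{gl}_{1,2}(\mathcal{V})$ and the compatibility of the three tensors; once they hold, the diffeomorphism of the second step transports the reduced structure to $\mathfrak{D}_{\rm res}(\mathcal{H})$.
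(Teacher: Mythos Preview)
Your proposal is correct and follows the same route as the paper: identify $\mu_{\operatorname{O}}^{-1}(0)=\operatorname{Sp}_{1,2}(\mathcal{V},\Omega)$, pass to $\mathfrak{D}_{\rm res}(\mathcal{H})$ via Theorem~\ref{disc restreint de Siegel = espace homogene}, and invoke the K\"ahler reduction results of \cite{Tum_Gr}. Where you hedge on the horizontal splitting, the paper simply exhibits the explicit closed complement $\mathfrak{m}=\left\{\begin{pmatrix}0&A\\ \bar A&0\end{pmatrix}: A\in L^2(\mathcal{H}_-,\mathcal{H}_+),\ A^T=A\right\}$ at $\operatorname{id}_{\mathcal{H}}$ (propagated by the transitive left $\operatorname{Sp}_{1,2}$-action) and cites \cite[Proposition~2.9, Theorem~2.13]{Tum_Gr}; conversely, your $A=S$ trick and the Fredholm invertibility step give more detail on the level-set computation than the paper, which restricts to $\operatorname{GL}_{1,2}(\mathcal{V})$ from the outset and merely asserts the identification.
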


\begin{proof} 
The $\operatorname{O}_{1,2}(\mathcal{V})$-action on $\mathcal{M}_{1,2}(\mathcal{V})$ preserves the open set $\operatorname{GL}_{1,2}(\mathcal{V})\subset \mathcal{M}_{1,2}(\mathcal{V})$ (Corollary~\ref{open}).
Consequently the momentum map for the $\operatorname{O}_{1,2}(\mathcal{V})$-action on $\mathcal{M}_{1,2}(\mathcal{V})$ restricts to $\operatorname{GL}_{1,2}(\mathcal{V})$. The zero level set $\mu_{\operatorname{O}}^{-1}(0)$ of the restriction of the momentum map to $\operatorname{GL}_{1,2}(\mathcal{V})$ consists of operators  $a \in \operatorname{GL}_{1,2}(\mathcal{V})$ such that $a^* J a = J$, hence $\mu_{\operatorname{O}}^{-1}(0)$ equals  $\operatorname{Sp}_{1,2}(\mathcal{V}, \Omega)$. In particular $\operatorname{Sp}_{1,2}(\mathcal{V}, \Omega)$ acts transitively on the zero level set by left  multiplications. At $\textrm{id}_{\mathcal{H}}\in \operatorname{Sp}_{1,2}(\mathcal{V}, \Omega)  = \mu_{\operatorname{O}}^{-1}(0)$,  the tangent space to the $\operatorname{U}_{1}(\mathcal{H}_+)$-orbit at $\textrm{id}_{\mathcal{H}}$ is the Lie algebra $\mathfrak{u}_1(\mathcal{H}_+)$
and admits a closed complement 
$
\mathfrak{m}=\left\{\left.
\begin{pmatrix}
0       &A\\
\bar{A} &0\end{pmatrix}
\right| A\in L^2(\mathcal{H}_-,\mathcal{H}_+), A^T = A\right\}. 
$
Therefore the quotient space is a smooth manifold \cite[Proposition~11, \S 1.6, Chap.III]{Bo1972}. 
The remainder of the proof is a consequence of \cite[Proposition~2.9]{Tum_Gr} and \cite[Theorem~2.13]{Tum_Gr}. 
\end{proof}

\begin{remark}
Formally integrable complex structures are examples of Nijenhuis operators. The construction of Nijenhuis operators on quotients of Banach Lie groups using conditions on their Lie algebras where studied in \cite{GLTNN}.
\end{remark}

\begin{theorem}\label{symplectic_form}
The resulting symplectic structure on $\mathcal{Q} := \mu_{\operatorname{O}}^{-1}(0)/\operatorname{U}_{1}(\mathcal{H}_+)$ is invariant by the left action of  $\operatorname{Sp}_{1,2}(\mathcal{V}, \Omega)$ and coincide (modulo a multiplicative constant) with the Kirillov-Kostant-Souriau symplectic form of $\mathfrak{D}_{\rm res}(\mathcal{H})$ after identification with an affine coadjoint orbit of $\operatorname{Sp}_{\textrm{res}}(\mathcal{V}, \Omega)$. More precisely, for $\tilde{U}$, $\tilde{V}\in T_{Z}\mathfrak{D}_{\rm res}(\mathcal{H})$ with $Z = h\bar{g}^{-1}$, we have
\begin{equation}
\omega_{\mathcal{Q}}\left(Z\right)\left(\tilde{U}, \tilde{V}   \right)  = 2\Im \Tr \left(\left(\operatorname{id}_{\mathcal{H}_+} -\bar{Z}Z\right)^{-1}\tilde{U}^*\left(\operatorname{id}_{\mathcal{H}_+} -Z\bar{Z}\right)^{-1}\tilde{V}\right).
\end{equation}
\end{theorem}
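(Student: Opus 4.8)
The plan is to compute the Marsden--Weinstein reduced form directly on a left-translated slice over a general point, and then to use invariance to settle the two remaining assertions.

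\emph{Left-invariance.} First I would record that $\omega_{\mathcal{M}}$ is already invariant under the \emph{left} $\operatorname{Sp}_{1,2}(\mathcal{V},\Omega)$-action on $\mathcal{M}_{1,2}(\mathcal{V})$: for $b\in\operatorname{Sp}_{1,2}(\mathcal{V},\Omega)$ and $V_1,V_2\in\mathfrak{gl}_{1,2}(\mathcal{V})$ one has $\omega_{\mathcal{M}}(bV_1,bV_2)=\Tr\,V_1^*b^*JbV_2=\Tr\,V_1^*JV_2=\omega_{\mathcal{M}}(V_1,V_2)$, since $b^*Jb=J$. As left multiplication preserves $\mu_{\operatorname{O}}^{-1}(0)=\operatorname{Sp}_{1,2}(\mathcal{V},\Omega)$ and commutes with the right $\operatorname{U}_1(\mathcal{H}_+)$-action, it descends to the quotient, and the defining relation $\pi^*\omega_{\mathcal{Q}}=\iota^*\omega_{\mathcal{M}}$ then forces $\omega_{\mathcal{Q}}$ to be invariant under the induced left $\operatorname{Sp}_{1,2}(\mathcal{V},\Omega)$-action. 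This gives the invariance assertion.

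\emph{Explicit formula.} Fix $Z\in\mathfrak{D}_{\rm res}(\mathcal{H})$ and a representative $M=\begin{pmatrix} g & h \\ \bar h & \bar g\end{pmatrix}\in\operatorname{Sp}_{1,2}(\mathcal{V},\Omega)$ with $Z=h\bar g^{-1}$. By the reduction principle (\cite[Prop.~2.9]{Tum_Gr}, \cite[Thm.~2.13]{Tum_Gr}) I may evaluate $\omega_{\mathcal{Q}}(Z)(\tilde U,\tilde V)$ as $\omega_{\mathcal{M}}$ applied to \emph{any} lifts in $T_M\mu_{\operatorname{O}}^{-1}(0)=M\,\mathfrak{sp}_{1,2}(\mathcal{V},\Omega)$, so I would lift through the slice $M\mathfrak{m}$, writing $\hat V=M\xi_V$ with $\xi_V=\begin{pmatrix} 0 & A_V \\ \bar A_V & 0\end{pmatrix}$, $A_V^T=A_V$. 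Differentiating the orbit map $M\mapsto h\bar g^{-1}$ and using $Z=h\bar g^{-1}$ gives $\tilde V=(\operatorname{id}_{\mathcal{H}_+}-Z\bar Z)\,gA_V\bar g^{-1}$; inserting the key identity $g^*(\operatorname{id}_{\mathcal{H}_+}-Z\bar Z)g=g^*g-h^T\bar h=\operatorname{id}_{\mathcal{H}_+}$, that is $(\operatorname{id}_{\mathcal{H}_+}-Z\bar Z)^{-1}=gg^*$, then solves to the clean expression $A_V=g^*\tilde V\bar g$ (which is automatically symmetric). Since $M^*JM=J$, the reduced form collapses onto the slice, $\omega_{\mathcal{M}}(M\xi_U,M\xi_V)=\Tr\,\xi_U^*J\xi_V$, and a short block computation using $A^T=A$ (whence $\bar A=A^*$) yields $\Tr\,\xi_U^*J\xi_V=\pm2\,\Im\Tr\,A_U^*A_V$. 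Substituting $A_U=g^*\tilde U\bar g$, $A_V=g^*\tilde V\bar g$ and using $(\operatorname{id}_{\mathcal{H}_+}-Z\bar Z)^{-1}=gg^*$ together with its conjugate $(\operatorname{id}_{\mathcal{H}_+}-\bar ZZ)^{-1}=\bar g\bar g^*$ makes the $g$-factors telescope under the trace, leaving exactly the asserted $2\,\Im\Tr\big((\operatorname{id}_{\mathcal{H}_+}-\bar ZZ)^{-1}\tilde U^*(\operatorname{id}_{\mathcal{H}_+}-Z\bar Z)^{-1}\tilde V\big)$.

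\emph{Identification with the KKS form and the main difficulty.} For the last assertion I would invoke \cite{GRT_proceedings}: the KKS form on the affine coadjoint orbit is $\operatorname{Sp}_{\rm res}(\mathcal{V},\Omega)$-invariant, hence $\operatorname{Sp}_{1,2}(\mathcal{V},\Omega)$-invariant. Because this group acts transitively (Theorem~\ref{disc restreint de Siegel = espace homogene}), an invariant $2$-form is determined by its value at $Z=0$, where both forms restrict to $\operatorname{U}_1(\mathcal{H}_+)$-invariant antisymmetric forms on $\mathcal{E}_{\rm res}(\mathcal{H})$ proportional to $\Im\Tr(\tilde U^*\tilde V)$; hence they agree up to the stated multiplicative constant. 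The main obstacle is purely computational: keeping the bar/transpose/adjoint bookkeeping consistent (in particular $(\bar g)^*=g^T$, $\bar A=A^*$ on symmetric operators, and $(\operatorname{id}_{\mathcal{H}_+}-\bar ZZ)^{-1}=\overline{gg^*}=\bar g\bar g^*$) so that the telescoping in the trace produces precisely the two factors $(\operatorname{id}_{\mathcal{H}_+}-\bar ZZ)^{-1}$ and $(\operatorname{id}_{\mathcal{H}_+}-Z\bar Z)^{-1}$ and fixes the overall sign and normalization.
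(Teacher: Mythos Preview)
Your proposal is correct and follows essentially the same route as the paper: prove left-invariance of $\omega_{\mathcal{M}}$ under $\operatorname{Sp}_{1,2}(\mathcal{V},\Omega)$ via $b^*Jb=J$, reduce the computation to $\Tr\,\xi_U^*J\xi_V=2\Im\Tr(A_U^*A_V)$ on the slice $\mathfrak{m}$, relate the slice coordinates to Siegel-disc tangent vectors by differentiating the orbit map to get $A_V=g^*\tilde V\bar g$, and simplify with $(\operatorname{id}_{\mathcal{H}_+}-Z\bar Z)^{-1}=gg^*$. The only cosmetic difference is that the paper first evaluates at $[\operatorname{id}_{\mathcal{H}}]$ and then transports by invariance, whereas you work directly over a general $M$; for the KKS comparison the paper simply cites \cite[Section~5]{GRT_proceedings} (obtaining the explicit constant $\omega_{\mathcal{Q}}=-\tfrac{1}{2\gamma}\omega_\gamma$), while your invariance-plus-transitivity argument gives the same proportionality without the constant.
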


\begin{proof}
Since $\operatorname{Sp}_{1,2}(\mathcal{V}, \Omega)$ preserves the symplectic form $\omega_{\mathcal{M}}$ of $\mathcal{M}_{1,2}(\mathcal{V})$, the resulting quotient symplectic form $\omega_{\mathcal{Q}}$ is $\operatorname{Sp}_{1,2}(\mathcal{V}, \Omega)$-invariant. At $[\textrm{id}_{\mathcal{H}}]_{\operatorname{U}_1(\mathcal{H}_+)}\in \mathcal{Q}$ the symplectic form $\omega_{\mathcal{Q}}$ reads:
\[
\begin{array}{l}
\omega_{\mathcal{Q}}\left( 
[\operatorname{id}_{\mathcal{H}}]_{\operatorname{U}_1(\mathcal{H}_+)}\right) 
\left(\left[
\begin{pmatrix}
A_1      &A_2\\
\bar{A}_2&\bar{A}_1\end{pmatrix} 
\right]_{ \mathfrak{u}_1( \mathcal{H} _+)},\left[
\begin{pmatrix}
B_1      &B_2\\
\bar{B}_2&\bar{B}_1\end{pmatrix} 
\right]_{ \mathfrak{u}_1( \mathcal{H} _+)}\right)
\\
= \Tr \begin{pmatrix}
0      &A_2\\
\bar{A}_2&0\end{pmatrix}^*
\begin{pmatrix}
i      &0\\
0&-i \end{pmatrix} 
\begin{pmatrix}
0     &B_2\\
\bar{B}_2&0\end{pmatrix} =i \operatorname{Tr}(\bar{A}_2B_2-\bar{B}_2A_2) =   2\Im \Tr(A_2^* B_2).
\end{array}
\]
The left  action of 
$\begin{pmatrix} g&h\\ \bar h&\bar g\end{pmatrix}
\in \operatorname{Sp}_{1,2}(\mathcal{V},\Omega)$ maps  a vector $U\in T_0\mathfrak{D}_{\rm res}(\mathcal{H})$ to 
$\tilde{U} =  g U \bar{g}^{-1} - h \bar{g}^{-1} \bar{h} U \bar{g}^{-1}
 = \left(\operatorname{id}_{\mathcal{H}_+} -Z\bar{Z}\right)g U \bar{g}^{-1} 
= (g^*)^{-1}U \bar{g}^{-1}.$ 
Therefore the expression of the symplectic form at $Z = h\bar{g}^{-1}\in\mathfrak{D}_{\rm{res}}(\mathcal{H})$ reads
\begin{align*}
\omega_{\mathcal{Q}}\left(Z\right)\left(\tilde{U}, \tilde{V}   \right) & = \omega_{\mathcal{Q}}\left(0\right)\left(U, V  \right) 
= 2 \Im \Tr \left(\left(g^*\tilde{U} \bar{g}\right)^*g^*\tilde{V} \bar{g}\right) 
=  - 2 \Im \Tr \left(g g^*\tilde{U}\bar{g} \bar{g}^*\overline{\tilde{V}} \right) \\ & = -2\Im \Tr \left(\left(\operatorname{id}_{\mathcal{H}_+} -Z\bar{Z}\right)^{-1}\tilde{U}\left(\operatorname{id}_{\mathcal{H}_+} -\bar{Z}Z\right)^{-1}\overline{\tilde{V}}\right),
\end{align*}
where we have used 
$
\operatorname{id}_{\mathcal{H}_+} -Z\bar{Z}
=  \operatorname{id}_{\mathcal{H}_+} - (g^*)^{-1} h^T \bar{h} g^{-1} 
= (g^*)^{-1} g^{-1} = (g g^*)^{-1}.
$
Note that since $Z\in\mathfrak{D}_{\rm{res}}(\mathcal{H})$ is a
Hilbert-Schmidt operator, it follows that 
$\operatorname{id}_{\mathcal{H}_+}-Z\bar Z$ is a Fredholm
operator with index zero. Since $\operatorname{id}_{\mathcal{H}_+}-Z\bar Z$ 
is also positive
definite, it is a bijection. 
As shown in \cite[Section~5]{GRT_proceedings}, the resulting symplectic form on $\mathfrak{D}_{\rm res}(\mathcal{H})$ coincide (modulo a multiplicative constant) with the Kirillov-Kostant-Souriau symplectic form of the corresponding affine coadjoint orbit of $\operatorname{Sp}_{\textrm{res}}(\mathcal{V}, \Omega)$. More precisely, $\omega_{\mathcal{Q}} = -\frac{1}{2\gamma}\omega_\gamma$ where $\omega_\gamma$ is the KKS symplectic form of the coadjoint orbit of $(0, \gamma)$ in the universal central extension of $\operatorname{Sp}_{\textrm{res}}(\mathcal{V}, \Omega)$ \cite[Equ. 36]{GRT_proceedings}.
\end{proof}

\begin{remark}
The symplectic form obtained coincides with (-2 times) the imaginary part of the Hermitian metric of the restricted Siegel disc which is a generalization to Hilbert-Schmidt operators of the finite-dimensional formulas, see \cite[Equation 98, section~4]{Nielsen} and \cite{Barbaresco,Barbaresco2}. Note that the complex structure obtained by K\"ahler quotient differs from the homogeneous complex structure of the restricted Siegel disc induced by its injection in $\mathcal{E}_{\rm{res}}(\mathcal{H})$ defined by \eqref{def_E_star}.
\end{remark}

\begin{credits}
\subsubsection{\ackname} This research is funded by FWF under the grant numbers~I-5015-N and PAT1179524. The author would
     like to acknowledge the excellent working conditions and
     interactions at Erwin Schroedinger Institut, Vienna, during the
     thematic programme ``Infinite-dimensional Geometry: Theory and
     Applications'' where part of this work was completed. 
\end{credits}

%
%
%

\end{document}